\numberwithin{table}{section}
\numberwithin{equation}{section}
\theoremstyle{plain}
\newtheorem{theorem}{Theorem}[section]
\newtheorem{definition}[theorem]{Definition}
\newtheorem{example}[theorem]{Example}
\newtheorem{corollary}[theorem]{Corollary}
\newtheorem{remark}[theorem]{Remark}
\newcommand{\NN}{\mathcal{N}} 
\newcommand{\CC}{\mathcal{C}} 
\newcommand{\RR}{\mathcal{R}} 
\newcommand{\KK}{\mathcal{K}} 
\author[1]{ \textbf{Al Jay Lan J. Alamin}}
\author[1,*]{ \textbf{Bryan S. Hernandez}}
\affil[1]{\small \textit{Institute of Mathematics, University of the Philippines Diliman, Quezon City 1101, Philippines}}
\affil[*]{Email address: \texttt{bshernandez@up.edu.ph}}
\title{\textbf{Positive steady states of a class of power law systems with independent decompositions}}
\date{}
\begin{document}
\maketitle
\begin{abstract} 
Power law systems have been studied extensively due to their wide-ranging applications, particularly in chemistry. In this work, we focus on power law systems that can be decomposed into stoichiometrically independent subsystems. We show that for such systems where the ranks of the augmented matrices containing the kinetic order vectors of the underlying subnetworks sum up to the rank of the augmented matrix containing the kinetic order vectors of the entire network, then the existence of the positive steady states of each stoichiometrically independent subsystem is a necessary and sufficient condition for the existence of the positive steady states of the given power law system. We demonstrate the result through illustrative examples. One of which is a network of a carbon cycle model that satisfies the assumption, while the other network fails to meet the assumption. Finally, using the aforementioned result, we present a systematic method for deriving positive steady state parametrizations for the mentioned subclass of power law systems, which is a generalization of our recent method for mass action systems.
\\ \\
	{\bf{Keywords:}} {chemical reaction networks, power law kinetics, reactant determined kinetics, generalized chemical reaction networks, generalized mass action systems, positive steady states, positive equilibria, independent decompositions, carbon cycle model}
	
\end{abstract}

\section{Introduction}

Throughout the past decade, network-based approaches to \emph{power law systems} have been widely used to study carbon cycle models \cite{FOMF2021,FOME2021,FOME2023,fortun2018deficiency,hernandez2023network,HEME2022jomc,HEME2023jomc,HEMEDE2020jomc} and other important models in biochemistry \cite{AJLM2017,AJLM2015,TAME2023,TAME2018}.
{Here, we consider power law systems 
in which the underlying networks have
\emph{stoichiometrically independent} (or simply \emph{independent}) decompositions.} A \emph{decomposition} of a network is induced by partitioning its set of reactions into disjoint subsets.
A decomposition is \emph{independent} if the rank of the stoichiometric matrix of a network is equal to the sum of the ranks of the stoichiometric matrices of its individual subnetworks \cite{feinberg2019crnt,hernandez2021independent}. {Independent decompositions are ubiquitous in important chemical, biological and biochemical systems \cite{FOMF2021,FOME2023,Hernandezetal2022,HEME2022jomc,HEME2023jomc,hernandez2023framework,hernandez2021independent,LUML2021,TAME2023,Villareal2023}.
In particular, the ubiquity is evident in carbon cycle models \cite{FOMF2021,FOME2023,HEME2022jomc,HEME2023jomc}, signaling pathways, \cite{hernandez2023framework,hernandez2021independent,LUML2021,Villareal2023} and genetic circuit networks \cite{hernandez2023framework}.}
Importantly, M. Feinberg showed that for a network with independent decompositions, the set of positive steady states the network is equal to the intersection of the sets of positive steady states of its independent subnetworks \cite{Feinberg1987stability,feinberg2019crnt}.

In 2012, B. Boros \cite{boros:paper:thesis,boros:thesis} provided conditions on the non-emptiness of the set of positive steady states of mass action systems that can be decomposed into independent linkage classes. This type is specific to decomposing the underlying network into its connected components.
In 2023, Hernandez et al. \cite{hernandez2023framework} then showed that for mass action systems that can be decomposed into independent subnetworks
where the deficiencies of the underlying individual subnetworks sum up to the deficiency of the entire network, and the ranks of the augmented matrices of reactant complexes of the subnetworks sum up to the rank of the augmented matrix of reactant complexes of the entire network, the existence of positive steady states of each independent subsystem implies the existence of positive steady states of the given power law system.
In this work, we have extended this result to {power law systems} and have shown to illustrate in a carbon cycle model by Anderies et al. \cite{anderies2013topology}. Furthermore, we illustrate, by example, that the necessary and sufficient condition will generally not hold if the conditions are not met.
To check if such power law system has positive steady states, we just break the network into its independent decompositions, if any, and determine if each subnetwork has positive steady states. If one of them has no positive steady states, then the whole network also has no positive steady states, as well.

In 2019, Johnston et al. \cite{JMP2019:parametrization} established a method for solving positive steady states of generalized mass action systems, which has been useful to parametrize positive steady states of mass action systems. In 2023, Hernandez et al. \cite{hernandez2023framework} proposed to decompose the underlying CRN into independent subnetworks first before applying the method of Johnston et al. \cite{JMP2019:parametrization}. In the same year, Hernandez and Buendicho \cite{hernandez2023network} illustrated that these approaches are also useful for power law systems. Most recently, Villareal et al. \cite{Villareal2023} extended the method for networks under mass action kinetics that can be decomposed to independent and identical subnetworks, by employing the principle of mathematical induction. In this work, we extend the framework of parametrization to accommodate {power law systems} systems.
Parametrization of positive steady states in biochemical systems can be useful for determining critical properties such as absolute concentration robustness, robust perfect adaptation, and multistationarity
\cite{dickenstein2019multistationarity,hirono2023,JMP2019:parametrization,shinar2010structural}.

	
	\section{Preliminaries}
	\label{prelim}

\subsection{Chemical reaction networks}

This section provides essential background on chemical reaction networks pertinent to this paper. Importantly, we present a formal definition of a chemical reaction network.

\begin{definition}
A \emph{chemical reaction network} or \emph{CRN} 
is a triple of nonempty finite sets, i.e., $\mathcal{N}=(\mathcal{S}, \mathcal{C}, \mathcal{R})$ where
    \begin{itemize}
        \item[a.] $\mathcal{S} = \left\{X_i:i=1,\ldots,m \right\}$ is the set of \emph{species},
        \item[b.] $\mathcal{C} = \{C_i:i=1,\ldots,n\}$ is the set of \emph{complexes}, which are non-negative linear combinations of the species, and
        \item[c.] $\mathcal{R} = \{R_i:i=1,\ldots,r\} \subset \mathcal{C} \times \mathcal{C}$ is the set of \emph{reactions}.
    \end{itemize}   
\end{definition}

We typically represent a reaction $(y,y')$ as $y \to y'$, where $y$ and $y'$ are the \emph{reactant complex} and the \emph{product complex}, respectively. Moreover, the associated \emph{reaction vector} of this reaction is defined as $y' - y$, which is a linear combination of species in the network.
The linear subspace $S$ of $\mathbb{R}^m$, with $m$ as the number of species, spanned by the reaction vectors is called the \emph{stoichiometric subspace} of the network. Hence, $S = \mathrm{span}\{y' - y \in \mathbb{R}^m \mid y \rightarrow y' \in \mathcal{R}\}$.
The \emph{stoichiometric matrix} $N$ of the network is an $m \times r$ matrix, where each column corresponds to the coefficients of the associated species in the respective reaction vector. Hence, the dimension of $S$ coincides with the rank of $N$.

\begin{definition}
The {\emph{deficiency}} of a CRN is $\delta:=n-\ell-s$ where $n$ is the number of complexes, $\ell$ is the number of linkage classes (i.e., connected components), and $s:={\rm{dim \ }} S$ (which is the same as ${\rm{rank \ }} N$).
\end{definition}

\subsection{Chemical kinetic systems}

To characterize the temporal evolution of species concentrations, a CRN is equipped with kinetics, defined as follows.

\begin{definition}
A \emph{kinetics} for a reaction network $\mathcal{N}=(\mathcal{S}, \mathcal{C}, \mathcal{R})$ is an assignment to
each reaction $y \to y' \in \mathcal{R}$ of a continuously differentiable {rate function} $\mathcal{K}_{y\to y'}: \mathbb{R}^\mathcal{S}_{{\geq} 0} \to \mathbb{R}_{\ge 0}$ such that this positivity condition holds:
$\mathcal{K}_{y\to y'}(c) > 0$ if and only if ${\rm{supp \ }} y \subset {\rm{supp \ }} c$, where ${\rm{supp \ }} y$ refers to the support of the vector $y$, i.e., the set of species with nonzero coefficient in $y$.
The pair $\left(\mathcal{N},\mathcal{K}\right)$ is called a \emph{chemical kinetic system}.
\end{definition}

	The \emph{species formation rate function} (SFRF) of a chemical kinetic system $(\mathcal{N},\mathcal{K})$ is given by $f\left( x \right) = \displaystyle \sum\limits_{{y} \to {y'} \in \mathcal{R}} {{\mathcal{K}_{{y} \to {y'}}}\left( x \right)\left( {{y'} - {y}} \right)}$ where $x$ is a vector of concentrations of the species that change over time.
This can be written as
$f(x) = N\mathcal{K}(x)$ where $N$ is the stoichiometric matrix of $\mathcal{N}$ and $\mathcal{K}$ is the vector of rate functions.
The system of \emph{ordinary differential equations} (ODEs) of a chemical kinetic system is given by $\dfrac{{dx}}{{dt}} = f\left( x \right)$.

\begin{definition}
A \emph{positive steady state} is a positive vector $c$ of species concentrations such that $f(c)=0$.
\end{definition}

Thus, the {set of positive steady states} of a chemical kinetic system $\left(\mathcal{N},\mathcal{K}\right)$ is given by
\begin{equation}
    \label{eqn:pl-rdk-ss}
    E_+:={E_ + }\left(\mathcal{N},\mathcal{K}\right)= \left\{ {x \in \mathbb{R}^m_{>0}|f\left( x \right) = 0} \right\}.
\end{equation}

\subsection{Power law systems}

To introduce power law systems, we first define power law kinetics as follows. 

 \begin{definition}
	A \emph{power law kinetics} is a kinetics of the form $${\mathcal{K}_i}\left( x \right) = {k_i}\prod\limits_j {{x_j}^{{F_{ij}}}}  := {k_i}{{x^{{F_{i}}}}} $$ for each reaction $i =1,\ldots,r$ where ${k_i} \in {\mathbb{R}_{ > 0}}$ and ${F_{ij}} \in {\mathbb{R}}$. The $r \times m$ matrix (i.e., indexed by reactions and species) $F=\left[ F_{ij} \right]$ is called the \emph{kinetic order matrix} that contains the kinetic order values, and $k \in \mathbb{R}^r$ is called the \emph{rate vector}.
	\label{def:power:law}
\end{definition}

In particular, if the kinetic order matrix contains the stoichiometric coefficients of reactant $y$ for each reaction $y \to y'$ in the network, then the system follows the well-known \emph{mass action kinetics}.

A \emph{power law system} is a chemical kinetic system with power law as its kinetics. Furthermore, we define a \emph{mass action system} in a similar manner.

{
\begin{remark}
Power law systems utilize the generalized mass action (GMA) format for kinetics, wherein the kinetic values are not necessarily restricted to positive integers as seen in the stoichiometric coefficients in reactant complexes of the underlying network, as in the case for mass action kinetics. The notion of GMA kinetics, introduced by M\"uller and Regensburger, restricts the kinetic values into nonnegative numbers \cite{Muller2012,Muller2014}. In our work, we employ power law kinetics as our findings are valid not only for nonnegative kinetic values but for real numbers in general.
\end{remark}
}

\begin{definition}
	A power law system has \emph{reactant-determined kinetics} (or of type \emph{PL-RDK}) if for any two reactions $i, j$ with identical reactant complexes, the corresponding rows of kinetic orders in $F$ are identical, i.e., ${F_{ik}} = {F_{jk}}$ for $k = 1,2,...,m$.
\end{definition}

{{\begin{definition}
	The $m \times n$ matrix $\widetilde{Y}$ is defined as
	$${\left( {\widetilde{Y}} \right)_{jk}} = \left\{ \begin{array}{l}
		{F_{ij}}{\text{  \ \ \ \ \ \ \  if $k$ {\text{is the reactant complex in reaction }}$i$}}\\
		{0} {\text{  \ \ \ \ \ \ \  \ \   otherwise \ }}
	\end{array} \right..$$
\end{definition}}}

\begin{definition}
	Let $n_r$ be the number of reactant complexes of a CRN. The $m \times n_r$ \emph{T-matrix} is the truncated $\widetilde{Y}$ where the non-reactant columns are deleted.
\end{definition}

In other words, each column of the {$T$-matrix} contains the kinetic order values associated with the species.


\subsection{Decomposition of chemical reaction networks}

{
A CRN can be decomposed into pieces of networks called \emph{subnetworks}, 
which we define as follows:

\begin{definition}
    Let $\mathcal{N}$ be a reaction network and $\mathcal{R}$ its reaction set.
	A {\emph{decomposition of $\mathcal{N}$}} into $\mathcal{N}_1$, $\mathcal{N}_2$, \ldots, $\mathcal{N}_\alpha$ is induced by partitioning $\mathcal{R}$ into $\mathcal{R}_1,  \mathcal{R}_2, \ldots, \mathcal{R}_\alpha$, respectively. The resulting networks $\mathcal{N}_1$,  $\mathcal{N}_2$, \ldots, $\mathcal{N}_\alpha$ are called \emph{subnetworks} of $\mathcal{N}$.
\end{definition}

We follow the notion of subnetworks introduced by M. Feinberg \cite{feinberg2019crnt}. That is, each $\mathcal{N}_i$ has the same set of species as $\mathcal{N}$, although some of the species seem to have no role in this particular subnetwork. Furthermore we consider $\mathcal{N}_i = (\mathcal{S},\mathcal{C}_i,\mathcal{R}_i)$ where $\mathcal{C}_i$ is the set of all complexes that appear as a product or reactant of a reaction in the $i$th subnetwork \cite{feinberg2019crnt}.
}


\begin{definition}
If the rank of the stoichiometric matrix of the whole network is the sum of the ranks of the stoichiometric matrices of its subnetworks, then the decomposition is \emph{independent}. These subnetworks are called \emph{independent subnetworks}.
\end{definition} 

This notion of independent decomposition lends itself to great importance in our study, since it gives an important relationship between the structure of the set of positive steady states of a given network and its independent subnetworks. The following theorem is due to Martin Feinberg \cite[Appendix 6.A]{feinberg2019crnt}.
\begin{theorem}[Feinberg Decomposition Theorem]
    \label{thm:feinberg-decomp}
    Let $(\mathcal N, \mathcal K)$ be a chemical kinetic system composed of the reaction network $\mathcal N$ and corresponding kinetics $\mathcal K.$ Suppose $\mathcal N$ is decomposed into $\alpha$ subnetworks, say $\mathcal N_1, \mathcal N_2, \dots, \mathcal N_k$ and denote the restriction of $\mathcal K$ to the restrictions in $\mathcal N_i$ as $\mathcal K_i.$ Then \[\bigcap_{i=1}^\alpha E_+(\mathcal N_i, \mathcal K_i) \subseteq E_+(\mathcal N, \mathcal K)\] where $E_+(\mathcal N, \mathcal K)$ denotes the set of positive steady states of the whole system and $E_+(\mathcal N_i, \mathcal K_i)$ denotes the set of positive steady states of the $i$th subsystem. Moreover, if the network decomposition is independent, then we have \[\bigcap_{i=1}^\alpha E_+(\mathcal N_i, \mathcal K_i) = E_+(\mathcal N, \mathcal K).\]
\end{theorem}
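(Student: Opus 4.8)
The plan is to exploit the fact that partitioning the reaction set additively splits the species formation rate function. Writing $f_i$ for the SFRF of the subsystem $(\mathcal{N}_i, \mathcal{K}_i)$, namely $f_i(x) = \sum_{y\to y' \in \mathcal{R}_i} \mathcal{K}_{y\to y'}(x)(y'-y)$, the partition $\mathcal{R} = \mathcal{R}_1 \sqcup \cdots \sqcup \mathcal{R}_\alpha$ immediately yields $f(x) = \sum_{i=1}^\alpha f_i(x)$ for every $x$. This single identity drives both inclusions, so I would establish it first and then read off each direction.

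For the first inclusion, I would take $x \in \bigcap_{i=1}^\alpha E_+(\mathcal{N}_i, \mathcal{K}_i)$. Then $x$ is positive and $f_i(x) = 0$ for each $i$, whence $f(x) = \sum_i f_i(x) = 0$ and so $x \in E_+(\mathcal{N}, \mathcal{K})$. This direction uses nothing about the decomposition beyond the fact that it is induced by a partition of $\mathcal{R}$.

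For the reverse inclusion under independence, I would pass to the stoichiometric subspaces. Each reaction vector $y'-y$ with $y\to y' \in \mathcal{R}_i$ lies in $S_i := \mathrm{span}\{y'-y : y\to y' \in \mathcal{R}_i\}$, so $f_i(x) \in S_i$ for all $x$. Because the reaction set is partitioned, one always has $S = S_1 + \cdots + S_\alpha$. The independence hypothesis $\dim S = \sum_i \dim S_i$, combined with $S = \sum_i S_i$, forces the sum to be direct: $S = S_1 \oplus \cdots \oplus S_\alpha$. Now if $x \in E_+(\mathcal{N}, \mathcal{K})$, then $0 = f(x) = \sum_i f_i(x)$ expresses the zero vector as a sum of elements $f_i(x) \in S_i$; by directness each summand vanishes, i.e., $f_i(x) = 0$ for every $i$. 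Together with positivity of $x$ this places $x$ in each $E_+(\mathcal{N}_i, \mathcal{K}_i)$, giving $E_+(\mathcal{N}, \mathcal{K}) \subseteq \bigcap_i E_+(\mathcal{N}_i, \mathcal{K}_i)$ and hence, with the first inclusion, equality.

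The only genuine step is the passage from independence to directness of the subspace sum; everything else is bookkeeping with the additive splitting of $f$. The main obstacle, if it deserves the name, is recognizing that independence is precisely the algebraic condition turning the general sum $\sum_i S_i$ into a direct sum (since one always has $\dim(\sum_i S_i) \le \sum_i \dim S_i$, with equality characterizing directness), which is exactly what lets the vanishing of the total rate function be propagated to each subsystem separately.
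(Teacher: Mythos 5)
Your argument is correct and is exactly the standard proof of this result; the paper itself gives no proof, simply citing Feinberg's book (Appendix 6.A), and the argument there proceeds the same way: additivity of the species formation rate function over the partition of $\mathcal{R}$ gives the inclusion $\bigcap_i E_+(\mathcal{N}_i,\mathcal{K}_i)\subseteq E_+(\mathcal{N},\mathcal{K})$, and independence upgrades $S=\sum_i S_i$ to a direct sum so that $f(x)=\sum_i f_i(x)=0$ with $f_i(x)\in S_i$ forces each $f_i(x)=0$. Nothing is missing.
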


Returning to our discussion on power law systems, suppose that $\mathcal{N}_i$ denotes the $i$th subnetwork for $i=1,2,\ldots,\alpha$. Furthermore, let $e_1,e_2,\ldots,e_\alpha \in \{0,1\}^n$ be the characteristic vectors of subsets of $\mathcal{C}_1,\mathcal{C}_2,\ldots,\mathcal{C}_\alpha$, respectively, where $\mathcal{C}_i$ is the set of reactant complexes in subnetwork $\mathcal{N}_i$. From the idea of B. Boros, we construct an $n_r \times \alpha$ matrix $L=[e_1,e_2,\ldots,e_\alpha]$.

We define the $\widehat{T}$-matrix for a network $\mathcal{N}$ as
$\begin{bmatrix}
    T\\
    L^{\top}
\end{bmatrix}$.
Moreover, for
$\widehat T_i$ associated with subnetwork $\mathcal{N}_i$, we have $\begin{bmatrix}
    T_i\\
    e_i^{\top}
\end{bmatrix}$.
Hence, the $\widehat{T}$-matrix is the augmented matrix containing the kinetic order vectors in the network.


 \section{Non-emptiness of the set of positive steady states of power law systems with independent decompositions}
	\label{result1}
{
Before we begin with the main result, we first discuss additional concepts and an important result to proceed with the proof of the theorem.

	\begin{definition}
		The \emph{molecularity matrix} or \emph{matrix of complexes} $Y$ is an $m\times n$ matrix such that $Y_{ij}$ is the stoichiometric coefficient of species $i$ in a complex $j$.
	\end{definition}

\begin{definition}
	The \emph{Laplacian matrix} $L$ associated to the graph of a kinetic system is an $n\times n$ matrix such that \[L_{ij} = \begin{cases}
		k_{ji}&\text{if }i \neq j\\
		k_{jj} - \displaystyle \sum_{l = 1}^n k_{jl}&\text{if }i = j
	\end{cases}\] where $k_{ji}$ is the rate constant for each reaction $i\to j.$
\end{definition}

 Note that if the reaction $i\to j$ is not present in the network, then $k_{ji} = 0.$ The Laplacian matrix is a common descriptor for graphs, as it quantifies how each vertex (complex) in a network differs from other vertices through its edges (reactions).

\begin{definition}
	The \emph{factor map} $\theta\colon \mathbb R^m\to \mathbb R^n$ of a kinetic system is defined as  
	\[\theta_C(x) = \begin{cases}
		\displaystyle \prod\limits_{j=1}^m x_i^{F_{ij}} = x^{F_i}&\text{if }C\text{ is a reactant complex of reaction }i\\
		0&\text{otherwise}
	\end{cases}.\]
\end{definition}

We use these concepts to redefine the set of positive steady states in \eqref{eqn:pl-rdk-ss}. As such, we have
\begin{equation}
    \label{def:steady_state1}
    E_+ = \{x\in\mathbb R_{>0}^m ~|~ Y\cdot L\cdot \theta(x) = 0\}
\end{equation} where $Y, L,$ and $\theta$ are the matrix of complexes, Laplacian matrix, and factor map of the kinetic system, respectively.

This was taken from the work of B. Boros \cite{boros:paper:thesis,boros:thesis}, which is a corollary of the Farkas' Lemma.

\begin{corollary}
    \label{thm:lin-alg-corollary}
    Let $\alpha, m,$ and $c_1, c_2, \ldots, c_\alpha$ be positive integers. Let $A_i\in\mathbb R^{c_i\times m}$ and $b_i\in\mathbb R^{c_{i}}$ for $i = 1,\ldots, \alpha.$ Suppose that
    \begin{enumerate}
        \item $\{x\in\mathbb R^m\mid A_ix=b_i\}\neq\emptyset$ for $i = 1,\ldots, \alpha,$ and 
        \item $\mathrm{Im}\,[A_1^\top, A_2^\top, \ldots, A_\ell^\top] = \mathrm{Im}\,A_1^\top \oplus \mathrm{Im}\,A_2^\top \oplus \cdots \oplus \mathrm{Im}\,A_\alpha^\top.$
    \end{enumerate} Then $\displaystyle \bigcap\limits_{i=1}^\alpha \{x\in\mathbb R^m\mid A_ix = b_i\}\neq\emptyset.$
\end{corollary}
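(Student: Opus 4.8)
The plan is to recast the non-emptiness of the intersection as the solvability of a single stacked linear system, and then to verify that solvability using the orthogonality (equality) form of the Farkas lemma alluded to above, i.e., the fundamental theorem of linear algebra. Stack the data by setting $A = [A_1^\top, A_2^\top, \ldots, A_\alpha^\top]^\top$, the $(c_1 + \cdots + c_\alpha) \times m$ matrix obtained by placing the $A_i$ on top of one another, and let $b = (b_1, b_2, \ldots, b_\alpha)$ be the corresponding stacked right-hand side. A vector $x$ lies in $\bigcap_{i=1}^\alpha \{x \in \mathbb R^m \mid A_i x = b_i\}$ if and only if $A x = b$; hence it suffices to prove that the single system $A x = b$ is consistent, i.e., that $b \in \mathrm{Im}\, A$.

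First I would invoke the orthogonal-complement characterization $\mathrm{Im}\, A = (\ker A^\top)^\perp$, so that consistency of $A x = b$ becomes equivalent to the statement that $\langle w, b\rangle = 0$ for every $w \in \ker A^\top$. Writing $w = (w_1, \ldots, w_\alpha)$ with $w_i \in \mathbb R^{c_i}$, and noting that $A^\top = [A_1^\top, \ldots, A_\alpha^\top]$, the equation $A^\top w = 0$ reads $\sum_{i=1}^\alpha A_i^\top w_i = 0$. Next I would translate hypothesis (2): because the sum $\mathrm{Im}\, A_1^\top + \cdots + \mathrm{Im}\, A_\alpha^\top$ is direct, the only way to realize $\sum_{i} A_i^\top w_i = 0$ is to have $A_i^\top w_i = 0$ for each $i$ separately. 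In other words, every $w \in \ker A^\top$ has each block satisfying $w_i \in \ker A_i^\top$.

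Finally I would bring in hypothesis (1). Consistency of the individual subsystem $A_i x = b_i$ means $b_i \in \mathrm{Im}\, A_i = (\ker A_i^\top)^\perp$, so that $\langle w_i, b_i\rangle = 0$ whenever $A_i^\top w_i = 0$. Combining this with the previous step, for an arbitrary $w \in \ker A^\top$ I obtain $\langle w, b\rangle = \sum_{i=1}^\alpha \langle w_i, b_i\rangle = 0$. Thus $b \perp \ker A^\top$, giving $b \in \mathrm{Im}\, A$ and hence a common solution $x$, which is exactly the desired conclusion.

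The main obstacle I anticipate is the correct handling of the direct-sum hypothesis. The subtlety is that consistency of the $i$th subsystem only controls inner products against vectors annihilated by the corresponding $A_i^\top$, whereas the stacked kernel $\ker A^\top$ a priori contains tuples $(w_1, \ldots, w_\alpha)$ for which merely the \emph{total} sum $\sum_i A_i^\top w_i$ vanishes, not each summand. The directness of the image sum is precisely what forces the terms to vanish individually, and this is the bridge that allows the per-subsystem orthogonality from hypothesis (1) to be summed into the global orthogonality statement; everything else is routine bookkeeping with the fundamental theorem of linear algebra.
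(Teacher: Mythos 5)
Your proof is correct. The paper does not actually supply a proof of this corollary (it only cites B. Boros and notes that it follows from Farkas' Lemma), and your argument --- stacking the systems, reducing consistency of $Ax=b$ to $b\perp\ker A^\top$, using the directness of $\mathrm{Im}\,A_1^\top\oplus\cdots\oplus\mathrm{Im}\,A_\alpha^\top$ to force $A_i^\top w_i=0$ blockwise, and then invoking $b_i\in\mathrm{Im}\,A_i=(\ker A_i^\top)^\perp$ --- is exactly the equality form of Farkas' Lemma that the paper alludes to, so it is essentially the intended approach.
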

}


  We are now ready to present {the main result} of this paper on the non-emptiness of the set of positive steady states of a class of power law systems. Furthermore, we will present interesting examples to illustrate this result.

 \begin{theorem}
        \label{thm:main-result}
        Let $(\NN, \KK)$ be a {power law} system decomposed into $\alpha\in\mathbb N$ {subsystems.} Suppose the {system's underlying network} satisfies
        \begin{enumerate}
            \item {$S = S_1 \oplus S_2 = \oplus \ldots \oplus S_\alpha,$ i.e. stoichiometric independence}
            \item $\widehat T = \widehat T_1 \oplus \widehat T_2 \oplus \ldots \oplus \widehat T_\alpha,$ {i.e. $\widehat T$-independence.}
        \end{enumerate} Then $E_+(\NN,\KK)\neq\emptyset$ if and only if $E_+(\NN_i,\KK_i)\neq\emptyset$ for each $i = 1, 2, \dots, \alpha.$

        \begin{proof}
            Let $\alpha\in\mathbb N.$ Suppose $E_+(\NN,\KK)\neq\emptyset.$ By Theorem \ref{thm:feinberg-decomp}, we have \[E_+(\NN,\KK) \subseteq \bigcap_{i=1}^{\alpha}E_+(\NN_i,\KK_i).\] Since the decomposition of $(\NN,\KK)$ is independent, we achieve set equality. It follows directly that $E_+(\NN_i,\KK_i)\neq\emptyset$ for each $i = 1,2,\ldots,\alpha.$

            For the converse, we assume that $E_+(\NN_i,\KK_i)\neq\emptyset$ for all $i = 1,2,\ldots,\alpha.$ Now, from \eqref{def:steady_state1}, in 
            we have $x\in E_+(\NN_i,\KK_i)$ if and only if \[\text{there exists } v_i \in \mathbb{R}_{>0}^{n_i} \text{ such that } \theta_i(x) = v_i \text{ and } Y_i \cdot L_i \cdot v_i = 0\] where $v_i$ is a vector with positive real values and whose size is equal to the number of complexes ($n_i$) in the subnetwork $\NN_i, Y_i$ is the molecularity matrix, and $L_i$ is the Laplacian matrix for the subnetwork $\NN_i$. 

            We define the element-wise $\ln$ function for vectors of size $n$ as
            \begin{equation*}
                \begin{aligned}
                    \ln& \colon \qquad \mathbb{R}_{>0}^m &&\to \qquad \mathbb{R}^m\\
                    &\phantom{\colon} (y_1, y_2, \ldots, y_m)^{\top} &&\mapsto (\ln y_1, \ln y_2, \ldots, \ln y_m)^{\top}.
                \end{aligned}
            \end{equation*} Note that this function is bijective. Then \[\ln(\theta_i(x)) = (T_i)^{\top}\ln(x)\] where $T_i$ is the matrix of reactant complexes of the subnetwork $\NN_i.$ Hence, $\ln(\theta_i(x)) = \ln(v_i)\in\mathrm{Im}(T_i)^{\top}.$ Thus, we can also say that $x\in E_+(\NN_i,\KK_i)$ if and only if \[\text{there exists } v_i\in\mathbb{R}_{>0}^{n_i}\text{ such that }\ln(v_i)\in\mathrm{Im}(T_i)^{\top}\text{ and } Y_i \cdot L_i \cdot v_i = 0.\]
            
            Define $\mathbf{1}_i\in \mathbb{R}^{n_i}$ as a vector with all coordinates equal to one.  Note that $\mathrm{Im}\left(\widehat T_i\right)^\top = \mathrm{Im}[T_i, \mathbf{1}_i]$ and for all $\gamma_i\in\mathbb{R}_{>0},$ we have $\ln(\gamma_iv_i) = \ln(v_i) + \ln(\gamma_i)\mathbf 1_i.$ Thus $\ln(v_i)\in\mathrm{Im}(T_i)^\top$ if and only if $\ln(v_i)\in\mathrm{Im}(\widehat T_i)^\top.$ This gives us the equivalence $x\in E_+(\NN_i,\KK_i)$ if and only if
            \[\text{there exists } v_i\in\mathbb{R}_{>0}^{n_i}\text{ such that }\ln(v_i)\in\mathrm{Im}(\widehat T_i)^{\top}\text{ and } Y_i \cdot L_i \cdot v_i = 0.\]
            
            Having assumed that $E_+(\NN_i,\KK_i)\neq\emptyset$ for all $i = 1, 2, \ldots, \alpha,$ we will now fix $v_1, v_2, \dots, v_\alpha$ such that $\ln(v_i)\in\mathrm{Im}(\widehat T_i)^{\top}$ and $Y_i\cdot L_i\cdot v_i = 0$ for all $i = 1, 2, \ldots, \alpha.$ Hence \[\left\{z\in\mathbb{R}^{m+1}_{>0}~|~\left(\widehat T_i\right)^{\top}\cdot z = \ln(v_i)\right\}\neq\emptyset\] for every $i = 1,2,\ldots, \alpha.$ Since, by assumption, we have $\widehat T = \widehat T_1 \oplus \widehat T_2 \oplus \ldots \oplus \widehat T_\alpha,$ then by Corollary \ref{thm:lin-alg-corollary},
            we get \[\bigcap_{i=1}^\alpha \left\{ z\in \mathbb{R}_{>0}^{m+1}~|~ \left(\widehat T_i\right)^\top\cdot z = \ln(v_i)\right\} \neq \emptyset.\]


            Hence, there exist $u\in\mathbb{R}^n$ and $w\in\mathbb{R}^\alpha$ such that \[\bgroup \renewcommand*{\arraystretch}{1.5}\begin{bmatrix}
                ~\left(\widehat T_1\right)^\top~\\
                \left(\widehat T_2\right)^\top\\
                \vdots\\
                \left(\widehat T_\alpha\right)^\top
            \end{bmatrix}\egroup\begin{bmatrix}u\\w\end{bmatrix} = \begin{bmatrix}
                \ln(v_1)\\
                \ln(v_2)\\
                \vdots\\
                \ln(v_\alpha)
            \end{bmatrix}\] Take $x\in\mathbb{R}^n_{>0}$ and  $\gamma_j\in\mathbb{R}_{>0}$ such that $\ln(x) = u$ and $\ln(\gamma_j) = -w_j$ for all $j = 1, 2,\ldots, \alpha.$ Then \[\begin{bmatrix}u\\w\end{bmatrix} = \begin{bmatrix}\ln (x)^\top\\\hline
            -\ln(\gamma_1)\\
            -\ln(\gamma_2)\\
            \vdots\\
            -\ln(\gamma_\alpha)\end{bmatrix} \implies \bgroup \renewcommand*{\arraystretch}{1.5}\begin{bmatrix}
                ~\left(\widehat T_1\right)^\top~\\
                \left(\widehat T_2\right)^\top\\
                \vdots\\
                \left(\widehat T_\alpha\right)^\top
            \end{bmatrix}\egroup\begin{bmatrix}\ln (x)^\top\\
            -\ln(\gamma_1)\\
            -\ln(\gamma_2)\\
            \vdots\\
            -\ln(\gamma_\alpha)\end{bmatrix} = \begin{bmatrix}
                \ln(v_1)\\
                \ln(v_2)\\
                \vdots\\
                \ln(v_\alpha)
            \end{bmatrix}.\] Thus, for all $i = 1, 2, \ldots, \alpha$ and $y\in\CC_i,$ we get \[\gamma_i(v_i)_y = \prod_{k=1}^\alpha x_k^{F_{ky}} = (\theta_i)_y(x)\] which gives $\gamma_iv_i = \theta_i(x).$ As a result, we have for $x\in\mathbb R_{>0}^m$ that
            \begin{align*}
                Y\cdot L \cdot \theta(x)
                &= \sum_{i=1}^{\alpha}Y_i\cdot L_i\cdot\theta_i(x)\\
                &= \sum_{i=1}^{\alpha} Y_i\cdot L_i\cdot(\gamma_i\cdot v_i)\\
                &= \sum_{i=1}^\alpha \gamma_i\cdot\left(Y_i\cdot L_i\cdot v_i\right)\\
                &= \sum_{i=1}^\alpha \gamma_i\cdot 0 = 0.
            \end{align*} Hence, by \eqref{def:steady_state1}, we get $x\in E_+(\NN, \KK).$ Therefore $E_+(\NN, \KK)\neq\emptyset$ which completes our proof.
        \end{proof}
\end{theorem}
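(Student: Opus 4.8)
The plan is to prove the two implications separately, with the forward direction essentially immediate and the converse carrying the real content. For necessity, I would simply invoke the Feinberg Decomposition Theorem (Theorem~\ref{thm:feinberg-decomp}). Condition~1 is exactly stoichiometric independence, so Feinberg's theorem upgrades the containment $E_+(\NN,\KK)\subseteq\bigcap_i E_+(\NN_i,\KK_i)$ to an equality. Nonemptiness of the left-hand side then forces every factor $E_+(\NN_i,\KK_i)$ to be nonempty, and no use of condition~2 is needed here.

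For sufficiency, the key idea is to linearize the positive-steady-state condition via the logarithm and thereby reduce the problem to the simultaneous solvability of a family of affine-linear systems. Starting from the reformulation~\eqref{def:steady_state1}, I would first record that $x\in E_+(\NN_i,\KK_i)$ iff there is a positive vector $v_i$ with $\theta_i(x)=v_i$ and $Y_iL_iv_i=0$. Applying the bijective elementwise logarithm turns the constraint $\theta_i(x)=v_i$ into the linear relation $\ln v_i = T_i^\top\ln x$, so that $\ln v_i\in\mathrm{Im}\,T_i^\top$. The crucial bookkeeping step is to pass from $T_i$ to the augmented matrix $\widehat T_i=\bigl[\begin{smallmatrix}T_i\\ e_i^\top\end{smallmatrix}\bigr]$: because $Y_iL_iv_i=0$ is homogeneous, rescaling $v_i$ by any $\gamma_i>0$ preserves the steady-state equation while shifting $\ln v_i$ by $\ln\gamma_i\,\mathbf 1_i$, and $\mathrm{Im}\,\widehat T_i^\top=\mathrm{Im}[T_i,\mathbf 1_i]$. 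Hence the membership $\ln v_i\in\mathrm{Im}\,T_i^\top$ may be relaxed to $\ln v_i\in\mathrm{Im}\,\widehat T_i^\top$ at no cost.

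With this equivalence in hand, I would fix one witness $v_i$ per subnetwork (possible by hypothesis) and observe that each linear system $\widehat T_i^\top z=\ln v_i$ is solvable. Condition~2, the $\widehat T$-independence $\widehat T=\bigoplus_i\widehat T_i$, is precisely the image-directness hypothesis of Corollary~\ref{thm:lin-alg-corollary}; applying it yields a single $z$ solving all $\alpha$ systems at once. Splitting $z=(u,w)$ with $u\in\mathbb R^m$ and $w\in\mathbb R^\alpha$ and setting $\ln x=u$, $\ln\gamma_i=-w_i$ then recovers $\theta_i(x)=\gamma_i v_i$ for every $i$. Finally, the global species-formation rate decomposes across the subnetworks as $YL\theta(x)=\sum_i Y_iL_i\theta_i(x)=\sum_i\gamma_i(Y_iL_iv_i)=0$, whence $x\in E_+(\NN,\KK)$.

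I expect the main obstacle to be the augmentation step introducing $\widehat T_i$ and the scaling factors $\gamma_i$. One must verify that relaxing $\mathrm{Im}\,T_i^\top$ to $\mathrm{Im}\,\widehat T_i^\top$ is harmless, which rests on the homogeneity of the equation $Y_iL_iv_i=0$, and then confirm that the extra all-ones rows are exactly what makes $\widehat T$-independence the correct input for the Farkas-type corollary. Reassembling the common solution $z$ into a single concentration vector $x$ together with consistent scalings $\gamma_i$ is the delicate part; everything after that is a routine linearity computation.
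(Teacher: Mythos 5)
Your proposal is correct and follows essentially the same route as the paper's own proof: Feinberg's decomposition theorem for necessity, and for sufficiency the logarithmic linearization of $\theta_i(x)=v_i$, the relaxation from $\mathrm{Im}\,T_i^\top$ to $\mathrm{Im}\,\widehat T_i^\top$ via the scaling freedom $\gamma_i$, the Farkas-type corollary applied under $\widehat T$-independence to obtain a common solution $z=(u,w)$, and the final summation $YL\theta(x)=\sum_i\gamma_i\,Y_iL_iv_i=0$. No substantive differences to report.
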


\begin{example} 
    {We c}onsider the PL-RDK system for the Earth's pre-industrial carbon cycle $(\NN, \KK)$ as presented in \cite{anderies2013topology,fortun2018deficiency}. {The system's network has three species, namely $X_1, X_2,$ and $X_3,$ denoting the land, atmosphere, and ocean carbon pools, respectively. It has six complexes given by $X_1 + 2X_2, 2X_1 + X_2, X_1 + X_2, 2X_2, X_2,$ and $X_3,$ and its reaction set is given by $\RR = \{R_1, R_2, R_3, R_4\}$ which consists of four reactions: $R_1 = (X_1 + 2X_2, 2X_1 + X_2), R_2 = (X_1 + X_2, 2X_2), R_3 = (X_2, X_3),$ and $R_4 = (X_3, X_2)$. The graph of $\NN$ is given by
    
    \begin{equation*}
		\begin{tikzpicture}[baseline=(current  bounding  box.center)]
			\node (X1 + 2X2) {$X_1 + 2X_2$};
			\node (2X1 + X2) [right of=X1 + 2X2, node distance = {25mm}]{$2X_1 + X_2$};
			\node (X1 + X2) [below of=X1 + 2X2, , node distance = {5mm}]{$X_1 + X_2$};
			\node (2X2) [right of=X1 + X2, node distance = {25mm}]{$2X_2$};
			\node (X2) [below of=X1 + X2, , node distance = {5mm}]{$X_2$};
			\node (X3) [right of=X2, node distance = {20mm}]{$X_3$};
			\draw [->] (X1 + 2X2) -- (2X1 + X2);
			\draw [->] (X1 + X2) -- (2X2);
			\draw [-left to] ($(X2.east) + (0pt, 2pt)$) -- ($(X3.west) + (0pt, 2pt)$);
			\draw [-left to] ($(X3.west) + (0pt, -2pt)$) -- ($(X2.east) + (0pt, -2pt)$);
		\end{tikzpicture}
	\label{graph:example_preind}
	\end{equation*}}
    {Moreover,} the network's $T$-matrix is given by \begin{equation*}
        T = \begin{blockarray}{ccccc}
		X_1 + 2X_2 & 2X_1 + X_2 & X_2 & X_3 \\
		\begin{block}{[cccc]c}
			f_{11}&f_{12}&0&0&X_1\\
            f_{21}&f_{22}&1&0&X_2\\
            0&0&0&1&X_3\\
        \end{block}
    \end{blockarray}\end{equation*} where $f_{11}, f_{12}, f_{21}, f_{22}\in\mathbb R.$ Now, suppose either $f_{11} \neq f_{12}$ or $f_{21}\neq f_{22}.$ Using the methodology in \cite[p.~13]{hernandez2021independent}, we can generate the independent decomposition of $\NN$ to $\NN_1$ and $\NN_2$ whose reaction sets are given by {$\RR_1 = \{R_1, R_2\}$ and $\RR_2 = \{R_3, R_4\},$} respectively. {Since the decomposition is independent, we have by definition that the direct sum of the stoichiometric matrices of the subnetworks is precisely the stoichiometric matrix of the entire network.}
    
    


    {We will now show that the system satisfies $\widehat T$-independence, i.e. $\widehat T = \widehat T_1 \oplus \widehat T_2.$ To prove this, we show that the sum of the ranks of the $\widehat T$ matrices of the subnetworks equals the rank of the $\widehat T$ matrix of the entire network. } 

    Recall that $\widehat T$ is the augmented matrix of kinetic order vectors and for $\NN,$ this is given by
    \[\widehat T = \begin{blockarray}{ccccc}
		X_1 + 2X_2 & 2X_1 + X_2 & X_2 & X_3 \\
		\begin{block}{[cccc]c}
			f_{11}&f_{12}&0&0&X_1\\
            f_{21}&f_{22}&1&0&X_2\\
            0&0&0&1&X_3\\
            1&1&0&0&\NN_1\\
            0&0&1&1&\NN_2\\
        \end{block}
    \end{blockarray}.\] Notice that if $f_{11}\neq f_{12},$ then the $X_1$ row must be linearly independent with the $\NN_1$ row. We also notice that the $X_2, X_3,$ and $\NN_2$ rows are linearly independent due to the presence of zero elements. Therefore, {we have ${\rm{rank}}(\widehat T) = 4.$}

    We proceed to the augmented matrix of kinetic order vectors for subnetworks $\NN_1$ and $\NN_2.$ For $\NN_1,$ we have \[\widehat T_1 = \begin{blockarray}{ccc}
		X_1 + 2X_2 & 2X_1 + X_2\\
		\begin{block}{[cc]c}
			f_{11}&f_{12}&X_1\\
            f_{21}&f_{22}&X_2\\
            1&1&\NN_1\\
        \end{block}
    \end{blockarray}\] with rank two since either the $X_1$ or $X_2$ row must be linearly independent with the $\NN_1$ row. For $\NN_2,$ we have \[\widehat T_2 = \begin{blockarray}{ccc}
		X_2 & X_3\\
		\begin{block}{[cc]c}
			1&0&X_2\\
            0&1&X_3\\
            1&1&\NN_1\\
        \end{block}
    \end{blockarray}\] which also has rank two. Since $\mathrm{rank}( \widehat T_1) + \mathrm{rank}(\widehat T_2) = 2 + 2 = 4 = \mathrm{rank}(\widehat T),$ we achieve $\widehat T$-independence. Therefore, the system satisfies the conditions of Theorem \ref{thm:main-result}. Thus, the non-emptiness of the set of positive steady states of the subnetworks $\mathcal{N}_1$ and $\mathcal{N}_2$ implies the non-emptiness of the set of positive steady states of the whole network $\mathcal{N}$.

    Indeed, the case for when $f_{21}\neq f_{22}$ and $f_{11} = f_{12}$ has been explored by Hernandez and Buendicho in \cite{hernandez2023network}. Under these assumptions, they were able to analytically solve for the parameterized positive steady states of the entire network by merging the positive steady states of the given subnetworks.
\end{example}


{
\begin{remark}
    If a power law system satisfies the conditions of independent decomposition to $\alpha$ subnetworks and $\widehat T$-independence, then Theorem \ref{thm:main-result} tells us that the existence of a single subnetwork with no positive steady states implies that the entire system will also have no positive steady states.
\end{remark}
}

\begin{example}
    Consider the toy system $(\NN,\KK)$ whose corresponding network has the reaction set given by $\RR = \{R_1, R_2, R_3, R_4\}$ where \begin{align*}
        R_1&\colon 2X\to X, && R_3\colon X + Y \to X,\\
        R_2&\colon 2Y\to X + 2Y,&&
        R_4\colon Y\to 2Y.
    \end{align*} The network has the corresponding graph
    \begin{equation*}
		\begin{tikzpicture}[baseline=(current  bounding  box.center)]
			\node (X+Y) {$X + Y$};
            \node (X) [above of=X+Y]{$X$};
            \node (2X) [left of=X]{$2X$};
			\draw [->] (X+Y) -- (X);
			\draw [->] (2X) -- (X);
            \node (2Y) [right of= X+Y, node distance={20mm}]{$2Y$};
            \node (Y) [right of=2Y] {$Y$};
            \node (X+2Y) [above of=2Y]{$X + 2Y$};
            \draw[->] (Y) -- (2Y);
            \draw[->] (2Y) -- (X+2Y);
        \end{tikzpicture}.
	\end{equation*}
 Moreover, suppose the system is endowed with PL-RDK whose kinetic-order matrix is given by
    \[F = \begin{blockarray}{ccc}
		X & Y\\
		\begin{block}{[cc]c}
			2&0&R_1\\
            0&2&R_2\\
            1/3&2/3&R_3\\
            0&1&R_4\\
        \end{block}
    \end{blockarray}.\] 

    Using the methods in \cite[p.~13]{hernandez2021independent}, we can generate the independent decomposition $\NN = \NN_1\cup \NN_2$ where the reaction sets of $\NN_1$ and $\NN_2$ are given by $\RR_1 = \{R_1, R_2\}$ and $\RR_2 = \{R_3, R_4\},$ respectively. 
    
    For each reaction, the kinetic vectors and rate laws are given as follows.
    \[\begin{array}{lcc}
        \hline
        \text{Reaction}&\text{Kinetic Vector}&\text{Rate Law}\\\hline
        R_1\colon 2X \to X & [2,0]^\top = 2X& k_1x^2y^0 = k_1x^2\\
        R_2\colon 2Y \to X + 2Y& [0,2]^\top = 2Y & k_2x^0y^2 = k_2y^2\\
        R_3\colon X + Y\to X& [1/3, 2/3]^\top = \frac{1}{3}X + \frac{2}{3}Y& k_3x^{1/3}y^{2/3}\\
        R_4\colon Y\to 2Y & [0, 1]^\top = Y & k_4x^0y^1 = k_4y\\\hline
    \end{array}\] where $k := [k_1, k_2, k_3, k_4]^\top \in \mathbb R_{>0}^4$ is the rate vector of the system. Then, the species formation rate function (SFRF) of the system is given by \[f(x,y) = \begin{bmatrix}
        \dot x\\ \dot y
    \end{bmatrix} = k_1x^2\begin{bmatrix}
        -1\\0
    \end{bmatrix} + k_2y^2 \begin{bmatrix}
        1\\0
    \end{bmatrix} + k_3x^{1/3}y^{2/3}\begin{bmatrix}
        0\\-1
    \end{bmatrix} + k_4y\begin{bmatrix}
        0\\1
    \end{bmatrix}\] where $x$ and $y$ are the concentrations of $X$ and $Y$ in the system, respectively.

    Now, we investigate the SFRFs of each of the subnetworks. For $\NN_1,$ the SFRF is given by \[f_1(x,y) = \begin{bmatrix}-k_1x^2 + k_2y^2\\0\end{bmatrix}\] which equals $[0,0]^\top$ whenever \[\begin{cases}
        x = \sqrt{\dfrac{k_1}{k_2}}\tau_1\\
        y = \tau_1
    \end{cases},\quad \tau_1\in\mathbb R_{>0},\] which is a positive steady state parametrization for $\NN_1$. Here, we have $y$ to be a free parameter. For $\NN_2,$ we have \[f_2(x,y) = \begin{bmatrix} 0\\-k_3x^{1/3}y^{2/3} + k_4y\end{bmatrix}\] Here, we have $f_2(x,y) = 0$ whenever
    \[-k_3x^{1/3}y^{2/3} + k_4y = y^{2/3}(-k_3x^{1/3} + k_4y^{1/3}) = 0.\] Since we are concerned with the system's positive steady states, we have $f_2(x,y) = 0$ when $-k_3x^{1/3} + k_4y^{1/3} = 0.$ Then, taking $\tau_2\in\mathbb R_{>0}$ to be a free parameter for the concentration $y$ at steady state, we have $(x,y)$ to be the positive steady states of $\NN_2$ whenever
    \[\begin{cases}
        x = \left(\dfrac{k_4}{k_3}\right)^{1/3}\tau_2\\
        y = \tau_2
    \end{cases},\quad \tau_2\in\mathbb R_{>0},\] which is a positive steady state parametrization for $\NN_2$. 

    Note that the set of positive steady states of $\NN_1$ and $\NN_2$ are nonempty for any rate vector $k\in \mathbb R^4_{>0}.$ However, if we take the intersection of these positive steady states to obtain that of the entire network, then we get \[\left(\sqrt{\dfrac{k_1}{k_2}}\tau_1, \tau_1\right) = \left(\left(\dfrac{k_4}{k_3}\right)^{1/3}\tau_2, \tau_2\right).\] This gives $\tau_1 = \tau_2$ and so \[\sqrt{\dfrac{k_1}{k_2}} = \left(\dfrac{k_4}{k_3}\right)^{1/3} \implies \frac{k_1}{k_2} = \left(\dfrac{k_4}{k_3}\right)^{2/3}.\] Therefore, there exists some rate vectors for which the set positive steady states of the whole network becomes empty.


    {According to Theorem \ref{thm:main-result}, since the decomposition is independent, the network does not satisfy the condition of $\widehat T$-independence. We show that this is indeed the case.}

    The reactant complexes of our system is given by $\{2X, 2Y, X + Y, Y\}\subseteq \CC.$ From this, we have the augmented matrix of kinetic order vectors of $\NN$ given by \[\widehat T = \begin{blockarray}{ccccc}
		2X & 2Y & X + Y & Y\\
		\begin{block}{[cccc]c}
			2&0&1/3&0&X\\
            0&2&2/3&1&Y\\
            1&1&0&0&\NN_1\\
            0&0&1&1&\NN_2\\
        \end{block}
    \end{blockarray}\] which is of rank three. For the independent subsystems $\NN_1$ and $\NN_2$, we have their augmented matrix of kinetic order vectors given by \[\widehat T_1 = \begin{blockarray}{ccc}
		2X & 2Y\\
		\begin{block}{[cc]c}
			2&0&X\\
            0&2&Y\\
            1&1&\NN_1\\
            0&0&\NN_2\\
        \end{block}
    \end{blockarray},\quad \widehat T_2 = \begin{blockarray}{ccc}
		X + Y & Y\\
		\begin{block}{[cc]c}
			1/3&0&X\\
            2/3&1&Y\\
            0&0&\NN_1\\
            1&1&\NN_2\\
        \end{block}
    \end{blockarray},\] respectively. The ranks of $\widehat T_1$ and $\widehat T_2$ are equal to two. Since $2+2\neq 3$, the ranks of the $\widehat T$ matrices of the independent subnetworks do not add up to that of the entire network. Therefore, we do not achieve $\widehat T$-independence, as desired.
    
\end{example}

 \section{Derivation of steady state parametrizations of power law systems with identical independent subnetworks}
	\label{result2}
 
{
 Before we discuss the next result, we first define what we mean when we say that two networks are \emph{identical.}

\begin{definition}
    Two networks $\NN$ and $\NN'$ are said to be \emph{identical} if there exist one-to-one correspondences between the sets of complexes and reactions in $\NN$ and the sets of complexes and reaction in $\NN',$ respectively.
\end{definition}

As an example, consider the networks $\NN$ and $\NN'$ whose graphs are given by $X\rightleftharpoons Y$ and $Y\rightleftharpoons Z,$ respectively. The two networks are identical since we can form the mapping between the sets of complexes of $\NN$ and $\NN'$ which maps the complex $X$ to $Y$ and the complex $Y$ to $Z.$ We also have the mapping which maps the reaction $X\rightarrow Y$ in $\NN$ to $Y\rightarrow Z$ in $\NN',$ as well as that of $Y\rightarrow X$ in $\NN$ to $Z\rightarrow Y$ in $\NN'.$
}

The recent study of Villareal et al. \cite[Section~3.5]{Villareal2023} presented a methodology to solve parametrized positive steady states of a network, under mass action kinetics, that can be decomposed to independent and identical subnetworks. Here, we generalize the method to allow for PL-RDK systems that satisfy the assumptions of Theorem \ref{thm:main-result}, which guarantees the existence of positive steady states of the {system} whenever each independent {subsystem} has positive steady states.

{Recall that PL-RDK systems are systems with power law kinetics such that the rows of kinetic orders for the same reactant complexes are equal.} {Using \autoref{thm:main-result},} we can now introduce the steps to derive a positive steady state parametrization of a CRN with PL-RDK kinetics and having $n$ independent and identical subnetworks, where $n$ is any positive integer.
\begin{itemize}
    \item[1.] Decompose the underlying CRN into $n$ independent and identical subnetworks.
    \item[2.] Derive the positive steady state parametrization for any independent subnetwork, with kinetics dictated by the kinetic order vectors. If the subnetwork has a positive steady state, then proceed with the next step. Otherwise, there is no need to proceed to the next step because it will follow that the whole network has no positive steady state. 
    \item[3.] Generalize a formula for parametrized positive steady states for any subnetwork based on the results in the previous step.
    \item[4.] Derive a positive steady state parametrization of the network for the case when $n=2$ by merging the positive steady states of two independent subnetworks.
    \item[5.] Apply the principle of mathematical induction to obtain a positive steady state parametrization of the whole network for any positive integer $n$.
\end{itemize}
 It is easy to show that the steps are valid by invoking the Feinberg Decomposition Theorem (i.e., Theorem \ref{thm:feinberg-decomp}) and then using the Principle of Mathematical Induction.

 
 To illustrate the steps, we consider a network $\NN^n$ with $n+1$ species, $n+1$ complexes, and $n$ reversible reactions whose graph is given by  
 \begin{equation}
 \label{eqn:futile-cyc}
 \begin{tikzpicture}[baseline=(current  bounding  box.center)]
			\node (X0) {$X_0$};
            \node (X1) [right of=X0] {$X_1$};
            \node (dots)[right of=X1]{$\ldots$};
            \node (Sn) [right of=dots]{$X_n$};

            \draw [-left to] ($(X0.east) + (0pt, 1pt)$) -- ($(X1.west) + (0pt, 1pt)$);
            \draw [-left to] ($(X1.west) + (0pt, -1pt)$) -- ($(X0.east) + (0pt, -1pt)$);

            \draw [-left to] ($(X1.east) + (0pt, 1pt)$) -- ($(dots.west) + (0pt, 1pt)$);
            \draw [-left to] ($(dots.west) + (0pt, -1pt)$) -- ($(X1.east) + (0pt, -1pt)$);

            \draw [-left to] ($(dots.east) + (0pt, 1pt)$) -- ($(Sn.west) + (0pt, 1pt)$);
            \draw [-left to] ($(Sn.west) + (0pt, -1pt)$) -- ($(dots.east) + (0pt, -1pt)$);
\end{tikzpicture}.\end{equation}
Here, we will solve a positive steady state parametrization symbolically.

Notice that the $n$ reversible reactions give us a total of $2n$ reactions. In this scenario, we can decompose $\NN^n$ to $n$ independent and identical subnetworks, each denoted $\NN_i^n$ whose reaction set is given by $\RR_i = \{X_i \to X_{i+1}, X_{i+1} \to X_i\}$ for $i = 0,1,\ldots, n-1.$ If the kinetic order associated to each species equals one, then we have the familiar mass action system. 

Suppose that the kinetic order associated to the reactant species $X_i$ in reaction $X_i\to X_{i+1}$ equals $f_i$ for $i = 0,1,\ldots, n-1.$ Meanwhile, suppose that the kinetic order of $X_{j}$ in the backward reaction $X_{j}\to X_{j-1}$ is $f_j'$ for $j = 1,2\ldots, n.$ Since the system is reactant-determined, we must have $f_i = f_j'$ for all $i = j.$ Moreover, suppose the rate constant for the reactions $X_{i}\to X_{i+1}$ is $k_{i}$ for $i = 0,1,\ldots, n-1.$ For the reverse reaction $X_{i+1}\to X_i,$ we assume the rate constant to be $k_{i}'$ for $i = 0,1,\ldots, n-1.$


{Now, we will show that the system satisfies the conditions in \autoref{thm:main-result}. Since we have $\NN^n$ to $n$ independent and identical subnetworks, the system's underlying network satisfies the first condition. It remains to show that such decomposition will also satisfy $\widehat T$-independence.}


We can obtain the $T$-matrix, which is the truncated matrix of kinetic order values with non-reactant columns deleted, of the system given by \[T = \begin{bmatrix}
    f_0 & 0 & 0 & 0 & 0 & \cdots & 0 & 0 & 0\\
    0 & f_1 & f_1' & 0 & 0 & \cdots & 0 & 0 & 0\\
    0 & 0 & 0 & f_2 & f_2' &\cdots & 0 & 0 & 0\\
    \vdots & \vdots & \vdots &\vdots & \vdots & \ddots & \vdots & \vdots & \vdots\\
    0 & 0 & 0 & 0 & 0 & \cdots & f_{n-1} & f_{n-1}' & 0\\
    0 & 0 & 0 & 0 & 0 & \cdots & 0 & 0 & f_n
\end{bmatrix}.\] Note that the size of this matrix is $2n\times(n+1).$ If we augment this $T$-matrix with the characteristic vector of each reactant complex in the $n$ subnetworks, we generate the $\widehat T$-matrix of size $2n\times (2n+1)$ for the network. Since each of the columns are linearly independent, we conclude that the rank of the $\widehat T$-matrix is $2n.$ Note that $f_i = f_{i}'$ for each $i = 1,\ldots, n-1$ since we are dealing with PL-RDK systems. Since any subnetwork in the system involves only two reactions, the rank of each $\widehat T$-matrix of the $n$ subnetworks is two. {Note here that reactant determined kinetics ensures the independence of the $\widehat T$ matrices of the subnetworks.} This shows that the system satisfies the second assumption. Since the network satisfies the two conditions presented in \autoref{thm:main-result}, we can take the intersection of the positive steady states of each subnetwork to obtain the positive steady states for the entire network. 


{In addition to the network we consider in \eqref{eqn:futile-cyc}, numerous futile cycles documented in various literature sources \cite{conradi2018dynamics,feliu2020proof,rao2017global,reyes2022numerical} could be explored to demonstrate the applicability of our approach in analyzing such networks.}

\begin{theorem}
    \label{thm:result-2}
    For every $n\in\mathbb N,$ the PL-RDK system $(\NN^n, \KK^n)$ (or simply $\NN^n$) with underlying CRN 
    \begin{center}\begin{tikzpicture}[baseline=(current  bounding  box.center)]
			\node (X0) {$X_0$};
            \node (X1) [right of=X0] {$X_1$};
            \node (dots)[right of=X1]{$\ldots$};
            \node (Xn) [right of=dots]{$X_n$};

            \draw [-left to] ($(X0.east) + (0pt, 1pt)$) -- ($(X1.west) + (0pt, 1pt)$);
            \draw [-left to] ($(X1.west) + (0pt, -1pt)$) -- ($(X0.east) + (0pt, -1pt)$);

            \draw [-left to] ($(X1.east) + (0pt, 1pt)$) -- ($(dots.west) + (0pt, 1pt)$);
            \draw [-left to] ($(dots.west) + (0pt, -1pt)$) -- ($(X1.east) + (0pt, -1pt)$);

            \draw [-left to] ($(dots.east) + (0pt, 1pt)$) -- ($(Xn.west) + (0pt, 1pt)$);
            \draw [-left to] ($(Xn.west) + (0pt, -1pt)$) -- ($(dots.east) + (0pt, -1pt)$);
\end{tikzpicture}\end{center} has a positive steady state parametrization given by 
\[\begin{cases}
    x_i = \left(\dfrac{k_i'\cdots k_{n-1}'}{k_{i}\cdots k_{n-1}}\right)^{1/f_i}\tau^{f_n/f_i},&i = 0,1,\ldots, n-1\\
    x_n = \tau,&\tau\in\mathbb R_{>0}
\end{cases}\] where $f_i$ denotes the kinetic order value associated to the species $X_i, k_i$ denotes the rate constant for the forward reaction $X_i\to X_{i+1},$ and $k_{i}'$ denotes the rate constant for the backward reaction $X_{i+1}\to X_{i}$ for all $i = 0,1,\ldots, n-1.$
\end{theorem}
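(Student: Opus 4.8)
The plan is to carry out the five-step scheme described above, whose engine is Theorem~\ref{thm:main-result} together with the Feinberg Decomposition Theorem (Theorem~\ref{thm:feinberg-decomp}). Since it has already been verified that the decomposition of $\NN^n$ into the $n$ reversible pairs $\NN_i^n$ (each $X_i \rightleftharpoons X_{i+1}$) is independent and satisfies $\widehat T$-independence, Theorem~\ref{thm:main-result} gives $E_+(\NN^n) = \bigcap_{i=0}^{n-1} E_+(\NN_i^n)$. Thus it suffices to solve each subnetwork for its positive steady states and then realize the intersection explicitly; the content of the theorem is the resulting closed form.

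First I would compute the SFRF of a single reversible pair $\NN_i^n$. Because the kinetics is PL-RDK with $f_i = f_i'$, the forward rate is $k_i x_i^{f_i}$ and the backward rate is $k_i' x_{i+1}^{f_{i+1}}$, so the SFRF is a scalar multiple of the reaction vector $X_{i+1}-X_i$ and the single nontrivial steady-state equation is $k_i x_i^{f_i} = k_i' x_{i+1}^{f_{i+1}}$. Solving this for $x_i$ with $x_{i+1}$ as a free positive parameter yields the uniform subnetwork parametrization $x_i = (k_i'/k_i)^{1/f_i}\,x_{i+1}^{\,f_{i+1}/f_i}$, which records Steps~2 and~3.

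For Steps~4 and~5 I would establish the claimed formula by induction on $n$. The base case $n=1$ is precisely the single-pair computation with $x_1 = \tau$. For the inductive step I would split $\NN^n$ as $\NN^{n-1}\cup\NN_{n-1}^n$, where $\NN^{n-1}$ is the chain on $X_0,\ldots,X_{n-1}$; Theorem~\ref{thm:main-result} again yields $E_+(\NN^n) = E_+(\NN^{n-1})\cap E_+(\NN_{n-1}^n)$. Applying the inductive hypothesis to $\NN^{n-1}$ (with free parameter $x_{n-1}$) and the single-pair solution to $\NN_{n-1}^n$ (with $x_n = \tau$), merging the two parametrizations amounts to substituting $x_{n-1} = (k_{n-1}'/k_{n-1})^{1/f_{n-1}}\tau^{f_n/f_{n-1}}$ into the inductive formula for each $x_i$ with $i \le n-2$.

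The only genuine work, and the step demanding care, is the exponent and rate-constant bookkeeping in this substitution: one must check that raising $\sigma = (k_{n-1}'/k_{n-1})^{1/f_{n-1}}\tau^{f_n/f_{n-1}}$ to the power $f_{n-1}/f_i$ collapses the exponent on $\tau$ to $f_n/f_i$ and extends the telescoping products $k_i'\cdots k_{n-2}'$ and $k_i\cdots k_{n-2}$ by the factors $k_{n-1}'$ and $k_{n-1}$, exactly reproducing the stated formula for $\NN^n$. This is a routine but slightly delicate manipulation of the rational exponents $1/f_i$; once it is dispatched the induction closes, and since every $x_i$ is a positive real number for each $\tau\in\mathbb R_{>0}$, the displayed system is indeed a positive steady state parametrization of $\NN^n$.
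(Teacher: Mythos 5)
Your proposal is correct and follows essentially the same route as the paper's own proof: decompose $\NN^n$ into the $n$ reversible pairs, solve the single-pair steady-state equation $k_i x_i^{f_i} = k_i' x_{i+1}^{f_{i+1}}$, and merge the parametrizations by induction using Theorem~\ref{thm:main-result} to justify intersecting the subnetworks' steady-state sets. The only cosmetic differences are that you phrase the induction as splitting $\NN^n = \NN^{n-1}\cup\NN^n_{n-1}$ rather than $\NN^{n+1} = \NN^n\cup\NN^{n+1}_{n+1}$, and the exponent bookkeeping you flag as the delicate step does indeed close exactly as you describe.
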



\begin{proof} First, we decompose the system $\NN^n$ into $n$ identical and independent subsystems $(\NN_i^n, \KK_i^n)$ (or simply $\NN_i^n$) whose reaction sets are induced by the reversible reaction from $X_{i}$ to $X_{i+1}$ for $i = 0,1,\ldots, n-1.$ 

Next, we derive the positive steady state parametrization for the system when $n = 1.$ In this case, we have the simplest case of the network which has no nontrivial independent decomposition. Then, the SFRF of the system is given by 
\[f(x_0, x_1) = k_0x_0^{f_0}\begin{bmatrix}
    -1\\1
\end{bmatrix} + k_0'x_1^{f_1}\begin{bmatrix}
    1\\-1
\end{bmatrix} = \begin{bmatrix}
    -k_0x_0^{f_0} + k_0'x_1^{f_1}\\
    \phantom{-}k_0x_0^{f_0} - k_0'x_1^{f_1}
\end{bmatrix}\] where $k_0$ and $k_0'$ are the rate constants for the forward and backward reactions, respectively. Since the rows are linearly dependent, we can admit infinitely many solutions to the system. If we set the free parameter $\tau$ to be the concentration $x_1$ at steady state, then the positive steady states of the system $\NN^1$ are given by
\[\begin{cases}
    x_0 = \left(\dfrac{k_0'}{k_0}\right)^{1/f_0}\tau^{f_1/f_0},\\
    x_1 = \tau,
\end{cases}\quad \tau\in\mathbb R_{>0}.\]

In general, we have for any subnetwork $\NN_\beta^n$ involving, say $X_\beta$ and $X_{\beta+1}$ for $\beta = 0,1,\ldots, n-1$ that its positive steady states are given by
\[\begin{cases}
    x_{\beta} = \left(\dfrac{k_\beta'}{k_{\beta}}\right)^{1/f_{\beta}}\tau^{f_{\beta+1}/f_{\beta}},\\
    x_{\beta+1} = \tau,
\end{cases}\quad\tau\in\mathbb R_{>0}.\]

We proceed to the case when $n = 2.$ Here, we have the system $\NN^2$ whose associated CRN is given by
\begin{center}\begin{tikzpicture}[baseline=(current  bounding  box.center)]
			\node (X0) {$X_0$};
            \node (X1) [right of=X0] {$X_1$};
            \node (X2)[right of=X1]{$X_2$};

            \draw [-left to] ($(X0.east) + (0pt, 1pt)$) -- ($(X1.west) + (0pt, 1pt)$);
            \draw [-left to] ($(X1.west) + (0pt, -1pt)$) -- ($(X0.east) + (0pt, -1pt)$);

            \draw [-left to] ($(X1.east) + (0pt, 1pt)$) -- ($(X2.west) + (0pt, 1pt)$);
            \draw [-left to] ($(X2.west) + (0pt, -1pt)$) -- ($(X1.east) + (0pt, -1pt)$);
\end{tikzpicture}\end{center} which can be decomposed into two independent and identical subnetworks $\NN_1^2$ and $\NN_2^2.$ Thus, we get the positive steady states of $\NN_1^2$ to be
\[\begin{cases}
    x_0 = \left(\dfrac{k_0'}{k_0}\right)^{1/f_0}\tau_1^{f_1/f_0}\\
    x_1 = \tau_1,
\end{cases}\quad \tau_1\in\mathbb R_{>0}\] and similarly for $\NN_2^2,$ we have
\[\begin{cases}
    x_1 = \left(\dfrac{k_1'}{k_1}\right)^{1/f_1}\tau_2^{f_2/f_1}\\
    x_2 = \tau_2,
\end{cases}\quad\tau_2\in\mathbb R_{>0}.\] By \autoref{thm:main-result}, we can take the intersection of the sets of positive steady states of each independent subnetwork to get the set of positive steady states for the entire network. Here, since we get \[\tau_1 = \left(\frac{k_1'}{k_1}\right)^{1/f_1}\tau_2^{f_2/f_1},\] we have the concentration of $x_0$ at steady state to be \[x_0 = \left(\frac{k_0'}{k_0}\right)^{1/f_0}\left[\left(\frac{k_1}{k_1'}\right)^{1/f_1}\tau_2^{f_2/f_1}\right]^{f_1/f_0} = \left(\frac{k_0'k_1'}{k_0k_1}\right)^{1/f_0}\tau_2^{f_2/f_0}.\] Therefore, the positive steady states of $\NN^2$ are given by \[\begin{cases}
    x_0 = \left(\dfrac{k_0'k_1'}{k_0k_1}\right)^{1/f_0}\tau_2^{f_2/f_0},\\
    x_1 = \left(\dfrac{k_1'}{k_1}\right)^{1/f_1}\tau_2^{f_2/f_1},\\
    x_2 = \tau_2,
\end{cases}\quad\tau_2 \in\mathbb R_{>0}.\]

To complete our proof, we proceed by induction with the base case $n = 2.$ Indeed, we have shown that this is true from the above. For our inductive hypothesis, we assume that for every $n = m$ with $m \geq 2,$ the positive steady state solution of the system is given by 
\[\begin{cases}
    x_i = \left(\dfrac{k_i'\cdots k_{n-1}'}{k_i\cdots k_{n-1}}\right)^{1/f_i}\sigma^{f_n/f_i},&i = 0,1,\ldots, n-1\\
    x_n = \sigma,&\sigma\in\mathbb R_{>0}
\end{cases}.\] Denote this set of positive steady states by $E_+(\NN^n).$ We have shown that $E_+(\NN^n)$ satisfies the conditions {of stoichiometric independence, as well as of $\widehat T$-independence.} As such, by \autoref{thm:main-result}, we can write \[E_+\left(\NN^{n+1}\right) = E_+\left(\NN^{n}\right)\cap E_+\left(\NN^{n+1}_{n+1}\right)\] where $\NN^{n+1}_{n+1}$ corresponds to the system involving the reversible reaction from $X_n$ to $X_{n+1}.$ For $\NN^{n+1}_{n+1},$ the positive steady state solution is given by \[\begin{cases}
    x_{n} = \left(\dfrac{k_n'}{k_n}\right)^{1/f_n}\tau^{f_{n+1}/f_n},\\
    x_{n+1} = \tau,
\end{cases}\quad\tau\in\mathbb R_{>0}.\]
Taking the intersection of $E_+(\NN^n)$ and $E_+(\NN^{n+1}_{n+1})$ gives \[\sigma = \left(\dfrac{k_n'}{k_n}\right)^{1/f_n}\tau^{f_{n+1}/f_n}\] so that at the steady state of $\NN^{n+1},$ we have 
\begin{align*} x_i &= \left(\dfrac{k_i'\cdots k_{n-1}'}{k_i\cdots k_{n-1}}\right)^{1/f_i}\cdot\left[\left(\dfrac{k_n'}{k_n}\right)^{1/f_n}\tau^{f_{n+1}/f_n}\right]^{f_{n}/f_i}\\
&= \left(\frac{k_i'\cdots k_{n-1}'k_n'}{k_i\cdots k_{n-1}k_n}\right)^{1/f_i}\tau^{f_{n+1}/f_n}
\end{align*}
for each $i = 0,1,\ldots, n-1.$ Therefore, the parameterized positive steady states of $\NN^{n+1}$ is given by \[
\begin{cases}
    x_i = \left(\dfrac{k_i'\cdots k_n'}{k_i\cdots k_n}\right)^{1/f_i}\tau^{f_{n+1}/f_i},&i = 0,1,\ldots, n\\
    x_{n+1} = \tau,&\tau\in\mathbb R_{>0}
\end{cases}\quad\tau\in\mathbb R_{>0},
\] which completes the inductive step, and in whole, our proof.
\end{proof}

In mass actions systems, the kinetic order values $f_i$ equals $1$ for all $i = 0,1,\ldots, n-1.$ This gives us the following corollary of our result.
\begin{corollary}
    If the kinetics in the system in \autoref{thm:result-2} is mass action, then the positive steady state solution of the system is given by \[\begin{cases}
        x_i = \dfrac{k_i'\cdots k_{n}'}{k_i\cdots k_n}\cdot\tau\\
        x_n = \tau,
    \end{cases}\quad \tau\in\mathbb R_{>0}.\]
\end{corollary}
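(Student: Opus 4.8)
The plan is to obtain the corollary as an immediate specialization of Theorem \ref{thm:result-2}, so that no new machinery is needed. First I would record why mass action forces all kinetic orders to equal one: every reaction of $\NN^n$ has the form $X_i \to X_{i+1}$ or its reverse $X_{i+1} \to X_i$, so each reactant complex is a single species carrying stoichiometric coefficient one. By the definition of mass action kinetics, where the kinetic order matrix records precisely the stoichiometric coefficients of the reactant complexes, this yields $f_i = 1$ for every $i = 0, 1, \ldots, n$.

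Next I would substitute $f_i = 1$ into the parametrization already established in Theorem \ref{thm:result-2}. Each exponent $1/f_i$ and $f_n/f_i$ then collapses to $1$, so the general formula $x_i = \left(\frac{k_i' \cdots k_{n-1}'}{k_i \cdots k_{n-1}}\right)^{1/f_i}\tau^{f_n/f_i}$ reduces to $x_i = \frac{k_i' \cdots k_{n-1}'}{k_i \cdots k_{n-1}}\,\tau$ for $i = 0, 1, \ldots, n-1$, while $x_n = \tau$ is left unchanged. This is exactly the claimed mass action parametrization, with the product in numerator and denominator running over the rate constants of the $n-i$ reversible reactions separating $X_i$ from $X_n$, completing the argument.

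There is essentially no obstacle here: the result is a pure substitution into the previously proved general parametrization, requiring neither a fresh induction nor any additional algebra beyond setting the kinetic orders to one. The only point demanding a moment's care is purely notational, namely matching the index range of the product of rate constants as written in the corollary to the range $i, \ldots, n-1$ inherited from Theorem \ref{thm:result-2}; I would make this correspondence explicit to avoid any off-by-one ambiguity in the bookkeeping.
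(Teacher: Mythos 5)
Your proof is correct and matches the paper's (implicit) argument exactly: the corollary is obtained by setting $f_i = 1$ in the parametrization of Theorem \ref{thm:result-2}, which collapses every exponent $1/f_i$ and $f_n/f_i$ to $1$. You are also right to flag the index range as the one delicate point --- since the rate constants are indexed $k_0,\ldots,k_{n-1}$ and $k_0',\ldots,k_{n-1}'$, the products should terminate at $k_{n-1}$ and $k_{n-1}'$, so the upper index $n$ displayed in the corollary is an off-by-one slip rather than a difference in substance.
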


For systems whose underlying follows the form of $\NN^n$ in \eqref{eqn:futile-cyc}, our result gives a convenient method of determining the positive steady state solution of the system through its rate constants and kinetic orders. 

\section{Summary, Conclusion, and Outlook}
\label{summary}

In this work, we consider {power law} systems, which include the mass action, where the underlying CRN can be decomposed into independent subnetworks.
For such systems, we have shown that if 
the ranks of the augmented matrices of kinetic order vectors of the subnetworks sum up to the rank of the augmented matrix of kinetic order vectors of the entire network, then the set of the positive steady states of the given power law system is nonempty if and only if each set of positive steady states of its corresponding subsystems is nonempty.
We have illustrated the result using a carbon cycle network, which satisfies the assumptions and another network, which does not satisfy the assumptions of the theorem. Lastly, we have provided a method to derive steady state parametrizations of PL-RDK systems with identical stoichiometrically independent subnetworks. Thus, our results can be used to analyze positive steady state-related properties for such systems.

{
Finally, it would also be intriguing to investigate whether there exist connections between Feinberg's notion of independent subnetworks and inheritance of steady states, in particular, the inheritance of oscillations \cite{Banaji:oscillation}, local bifurcations \cite{Banaji:bifurcations}, and nondegenerate multistationarity \cite{banaji:nondegenerate:multistationarity}.}

\section*{Acknowledgement}
AJLJA acknowledges the Merit undergraduate scholarship under the Department of Science and Technology - Science Education Institute (DOST-SEI), Philippines.



\end{document}